\pdfoutput=1
\documentclass[11pt]{article}

\usepackage[utf8]{inputenc}
\usepackage[T1]{fontenc}
\usepackage{graphicx}
\usepackage{amsmath,amssymb,amsfonts}
\usepackage{amsthm}
\usepackage{mathtools}
\usepackage{enumitem}
\usepackage{microtype}
\usepackage{url}

\theoremstyle{plain}
\newtheorem{theorem}{Theorem}[section]
\newtheorem{lemma}[theorem]{Lemma}
\newtheorem{corollary}[theorem]{Corollary}

\newcommand{\keywords}[1]{\par\smallskip\noindent\textbf{Keywords:} #1}
\newcommand{\subclass}[1]{\par\smallskip\noindent\textbf{Mathematics Subject Classification:} #1}

\newcommand{\R}{\mathbb{R}}
\newcommand{\lstar}{\lambda^{\star}}
\newcommand{\Bac}{\mathrm{B\&C}}
\newcommand{\Bab}{\mathrm{B\&B}}
\newcommand{\eps}{\varepsilon}
\newcommand{\defeq}{\mathrel{:=}}

\begin{document}

\title{Sharp Logarithmic Thresholds for Cut Schedules in an Abstract Branch-and-Cut Model}

\author{Hongyi Jiang\\
Department of Systems Engineering\\City University of Hong Kong\\Hong Kong SAR, China\\
\texttt{hongyi.jiang@cityu.edu.hk}}

\date{}

\maketitle

\begin{abstract}
Branch-and-cut interleaves branching with cutting-plane
generation. How the two operations share the work of proving a bound is a
basic theoretical question. We study an abstract model in which a tree
certifies a target bound $Z$. Each branch node improves the bound by
$\ell$ on one child and by $r$ on the other, where $0<\ell\le r$. The
$i$th cut along a root-to-node path improves it by $c_i\ge0$, with
cumulative improvement $C_k=\sum_{i=1}^k c_i$.

Asymmetric branching enters through the rate $\lambda^{\star}>0$
defined by $e^{-\lambda^{\star}\ell}+e^{-\lambda^{\star}r}=1$. We
establish uniform two-sided bounds of order $e^{\lambda^{\star}Z}$ on the
minimal leaf count of pure branching trees. We then identify $\log k$ as the sharp threshold scale for the
power of cutting. For cut schedules with extended limit
$\gamma=\lim_{k\to\infty}C_k/\log k\in[0,\infty]$, minimal-size
trees obey a trichotomy. If $\gamma=\infty$, cuts prove
asymptotically all of the target. If $0\le\gamma<\infty$, the limiting
fraction of the bound proved by cuts is
$\gamma\lambda^{\star}/(1+\gamma\lambda^{\star})$. If $\gamma=0$,
branch-and-cut has the same exponential size rate as pure
branch-and-bound. This resolves open questions raised by Kazachkov, Le
Bodic, and Sankaranarayanan on minimal-size trees under
harmonically-worsening cuts, and generalizes their results to
asymmetric branching and to all cut schedules in the model with this logarithmic limit.
Finally, we show that branch-and-cut attains polynomial size in terms of $Z$ if and only if polynomially many
cuts reduce the residual bound to $O(\log Z)$.
\keywords{branch-and-cut, abstract model, asymmetric branching, cutting planes}
\subclass{90C10, 90C11, 90C57}
\end{abstract}

\section{Introduction}

Branch-and-cut ($\Bac$) is the computational backbone of modern
mixed-integer optimization. It combines the branch-and-bound
($\Bab$) method of Land and Doig~\cite{land1960automatic} with the
cutting-plane method pioneered by Gomory~\cite{gomory1958outline}. This hybrid method
powers state-of-the-art solvers for a wide range of linear and
nonlinear optimization problems~\cite{junger200950,achterberg2013mixed}.
At each node of the search tree, the method interleaves two operations.
\emph{Branching} splits the node into two child subproblems.
\emph{Cutting} adds valid inequalities, called cutting planes or cuts,
that tighten the relaxation at the current node. Both operations
improve the bound certified by the tree for the optimal value. Pure
branch-and-bound is the special case in which no cuts are added.

Although the computational practice of $\Bac$ is highly
mature, the theory of how its two ingredients interact is relatively
recent. One active line of research develops abstract models of
branching that record only the bound improvement achieved at each
node~\cite{le2017abstract,anderson2021further}, while other works
prove lower bounds on the size of general $\Bab$
trees~\cite{dadush2020complexity,dey2023lower} and analyze the
theoretical properties of strong branching~\cite{dey2024theoretical}. On the
cutting side, the difficulties inherent in cutting-plane selection are
discussed in~\cite{dey2018theoretical}, the numerical behavior of pure
cutting-plane algorithms is examined
in~\cite{balas2010enumerative,zanette2011lexicography}, and cut
generation remains an active research
topic~\cite{dey2022cutting}, increasingly through learning-based
approaches~\cite{berthold2025learning,cheng2026generalization}. Closest to our theme, Basu,
Conforti, Di Summa, and Jiang compare the relative power of branching,
cutting, and their combination, proving in particular that $\Bac$ proof size can
be exponentially smaller than pure branching or pure cutting
alone~\cite{basu2023complexity,basu2022complexity}, while Shah, Dey, and Molinaro show that, under common full-strong-branching
scores, adding even a single cut can exponentially increase the size of
the resulting $\Bab$ tree~\cite{shah2026non}.

The abstract model of Kazachkov, Le Bodic, and
Sankaranarayanan~\cite{kazachkov2024abstract} isolates this interaction
by recording only how much each operation improves the bound. A $\Bac$
tree is a rooted binary tree with node set $V_T$. Each node $v$ carries
a label $z_T(v)\ge0$, the bound improvement accumulated from the root
to $v$, and $z_T(\mathrm{root})=0$. A node with two children is a
\emph{branch node}, and a node with one child is a \emph{cut node}. A
node with no children is a leaf. The tree \emph{proves} a target bound
$Z$ if $z_T(v)\ge Z$ at every leaf $v$. In the single-variable model
studied in~\cite{kazachkov2024abstract}, every branch node uses the
same pair of improvements $(\ell,r)$ with $0<\ell\le r$. Branching is
thus described by two numbers, while cut strengths may vary along a
path. This framework was recently extended by Han and
Kazachkov~\cite{han2026strength}, who study when placing all cuts at the
root, before any branching, is optimal or near-optimal under general
node-processing-time functions. In this paper, we study \emph{tree size}, the node
count $|V_T|$. In the terminology of~\cite{kazachkov2024abstract} this
is the time function $w=1$. 

For a node \(v\), define its \emph{residual bound} by
\(
        z(v)=Z-z_T(v),
\)
which is the part of the target that remains to be proved below \(v\).
We write \(z\) for \(z(v)\) when the node is clear from context. The
node is terminal once \(z\le 0\). A branch node with residual bound
\(z\) sends its two children to residual bounds \(z-\ell\) and
\(z-r\).

We first recall the symmetric case \(\ell=r\), where depth gives a
natural measure of tree size. To prove residual bound \(z\) by
branching alone, one needs depth \(\lceil z/r\rceil\), and hence
\(2^{\lceil z/r\rceil}\) leaves. Several results
in~\cite{kazachkov2024abstract} are stated in terms of this branching
depth. With asymmetric branching, \(\ell\neq r\), depth is no longer
the right invariant, because the two children of a branch node have
different residual bounds. The invariant that survives is the number of
leaves in a pure branching tree, which we now make precise.

Let \(L(z)\) be the minimum number of leaves in a pure branching tree,
one using no cuts, that proves residual bound \(z\). Then
\begin{equation}
        L(z)=L(z-\ell)+L(z-r)\text{ for }z>0,
        \text{ and }
        L(z)=1\text{ for }z\le 0,
        \label{eq:n-rec}
\end{equation}
and a pure branching tree with \(L(z)\) leaves has \(2L(z)-1\) nodes, so
\eqref{eq:n-rec} is the leaf-count form of the node-count recurrence of
Le Bodic and Nemhauser~\cite{le2017abstract}.
The exponential growth rate of \(L\) is governed by the unique number
\(\lambda^\star>0\) satisfying
\[
        e^{-\lambda^\star\ell}+e^{-\lambda^\star r}=1.
\]
Equivalently, $e^{\lambda^\star}$ is the \emph{growth ratio}
$\varphi>1$ of~\cite{le2017abstract}, i.e.\ the root of
$x^{r}-x^{r-\ell}-1$ larger than one, which admits no closed-form
expression in general~\cite{anderson2021further}. When $\ell=r$, the
defining equation gives $\lambda^\star=\log 2/r$, recovering the
symmetric depth rate.

Cuts enter through a schedule of strengths that depends only on
position, where the improvement of a cut is determined by how many cuts
precede it on its root-to-node path. Formally, the model is
parametrized by a sequence $c_1,c_2,\ldots$ with $c_i\ge0$, and in a
B\&C tree for $(\ell,r;c)$ the child of a cut node with $k$ cut nodes
strictly above it receives bound improvement $c_{k+1}$. Writing
$C_0=0$ and $C_k=\sum_{i=1}^k c_i$, a node with
$k$ cut nodes strictly above it thus accumulates exactly $C_k$ improvement from cutting; in particular,
$C_k$ is nondecreasing. Each cut node contributes one node to the size
$|V_T|$.
A \emph{minimal-size} tree is one with fewest nodes among all B\&C
trees for $(\ell,r;c)$ proving $Z$; one exists, since pure branching
proves $Z$ after at most $\lceil Z/\ell\rceil$ splits on every path
and node counts are positive integers.

This model leaves a fundamental quantitative question open:

\begin{center}
\itshape
How does a smallest $\Bac$ tree split the work between cutting and branching, and when do cuts change the proof's complexity class?
\end{center}

The remainder of the paper is organized as follows. In
Section~\ref{sec:contribution}, we summarize our main contributions.
Section~\ref{sec:branching-rate} analyzes pure branching under asymmetric
improvements and establishes the growth rate $\lstar$ of the minimal leaf
count. Section~\ref{sec:threshold} presents the classification theorem for
cut schedules, together with its specialization to constant and
harmonically-worsening cuts. Section~\ref{sec:dichotomy} characterizes when
branch-and-cut achieves polynomial size while pure branch-and-bound remains
exponential. Section~\ref{sec:conclusion} closes with concluding remarks.

\section{Contributions}\label{sec:contribution}

We answer the question posed above for all cut schedules whose cumulative strength has an extended logarithmic limit, sharpening the branching analysis it requires along the way.

First, Theorem~\ref{thm:branching-rate} refines the asymptotic growth
rate of~\cite{le2017abstract} to uniform two-sided bounds
$e^{\lstar z}\le L(z)\le K_{\ell,r}e^{\lstar z}$, valid for every
residual $z>0$ and for arbitrary real gains $0<\ell\le r$; this
non-asymptotic form is what the threshold arguments of
Sections~\ref{sec:threshold} and~\ref{sec:dichotomy} require. For
$\ell=r$ the rate reduces to the symmetric quantity $\log 2/r$ used
in~\cite{kazachkov2024abstract}.

Second, our main result (Theorem~\ref{thm:cut-price-classification} in
Section~\ref{sec:threshold}) classifies how a minimal-size tree divides the
target between cutting and branching. It applies whenever the extended limit
$\gamma=\lim_{k\to\infty}C_k/\log k\in[0,\infty]$ exists. The critical scale
is logarithmic in the number of cuts. The asymptotics are governed by the
single scalar $\gamma$. If $\gamma=\infty$, cuts prove asymptotically all of
the target. The branching component then shrinks to a vanishing fraction.
If $0<\gamma<\infty$, cutting and branching each prove a constant share,
with limiting cut fraction $\gamma\lstar/(1+\gamma\lstar)$. If $\gamma=0$,
the cut fraction vanishes. In that case B\&C gains no exponential-size
advantage over pure B\&B. This trichotomy resolves open questions raised
in~\cite{kazachkov2024abstract}. It also generalizes their results to
asymmetric branching and to all cut schedules in the model with this logarithmic limit. In particular,
their constant-cut theorem is extended from symmetric to asymmetric
branching. Their harmonic-cut limit was previously established for
approximately optimal trees and supported by numerical evidence for minimal
trees. We prove it holds for exact minimal-size trees
(Corollary~\ref{cor:recoveries}). 

Third, a complementary dichotomy (Theorem~\ref{thm:dichotomy} in
Section~\ref{sec:dichotomy}) characterizes when cutting changes the
complexity class of the proof. This is in the spirit of the exponential
separations between B\&C and its pure components established by Basu et
al.~\cite{basu2022complexity}. Pure B\&B remains exponential while B\&C has polynomial size if and only if some polynomially
bounded number of cuts reduces the residual bound to $O(\log Z)$. On the
scale of cut schedules, polynomial growth of $C_k$ suffices for this
speedup while harmonically-worsening cuts never achieve it
(Corollary~\ref{cor:poly-schedules}).

\section{The asymmetric branching rate}\label{sec:branching-rate}

This section establishes the quantitative form of the branching rate on
which the rest of the paper relies. In~\cite{le2017abstract}, the
growth ratio $\varphi=e^{\lstar}$ is obtained as an asymptotic limit
for integer gains and extended to rational gains by scaling.
Theorem~\ref{thm:branching-rate} instead gives explicit two-sided
bounds with the concrete constant $K_{\ell,r}=e^{\lstar r}$, valid for
every $z>0$ and for arbitrary real gains $0<\ell\le r$. The proofs of
Theorems~\ref{thm:cut-price-classification} and~\ref{thm:dichotomy}
also rely on this result.

\begin{theorem}
\label{thm:branching-rate}
Let $0<\ell\le r$, and let $\lstar>0$ be the unique solution of
\begin{equation}
        e^{-\lstar\ell}+e^{-\lstar r}=1.
        \label{eq:lambda-star}
\end{equation}
Take $K_{\ell,r}\defeq e^{\lstar r}$. Then for every $z>0$, we have
\begin{equation}
        e^{\lstar z}
        \le
        L(z)
        \le
        K_{\ell,r}e^{\lstar z}.
        \label{eq:growth}
\end{equation}
Consequently,
\(
        \log L(z) = \lstar z+O_{\ell,r}(1).
\)
\end{theorem}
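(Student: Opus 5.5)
Both inequalities in \eqref{eq:growth} will follow by strong induction on the residual $z$ from the recurrence \eqref{eq:n-rec}, using the defining identity \eqref{eq:lambda-star} to close each inductive step. First we note that the recurrence is well founded: since $\ell>0$, each application lowers $z$ by at least $\ell$. Hence $L(z)$ is determined after finitely many steps, and we may argue by induction on $m=\lceil z/\ell\rceil$. The statement we actually prove is slightly stronger, covering all real $z$ with suitable base cases.

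\textbf{Lower bound.} We claim $L(z)\ge e^{\lstar z}$ for all $z\le r$ in a base range, and we extend it to all $z$ by induction. For $z\le 0$ we have $L(z)=1\ge e^{\lstar z}$, since $\lstar>0$. For $z>0$, assume the bound holds for $z-\ell$ and $z-r$; both are smaller, and the induction hypothesis also covers nonpositive arguments. Then
\[
L(z)=L(z-\ell)+L(z-r)\ge e^{\lstar z}\bigl(e^{-\lstar\ell}+e^{-\lstar r}\bigr)=e^{\lstar z}.
\]
This gives the lower bound for every $z>0$, and indeed for every real $z$.

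\textbf{Upper bound.} Here the base cases need care: for $z\le 0$ the value $L(z)=1$ is \emph{not} bounded by $K e^{\lstar z}$ when $z$ is very negative. So the inductive claim will be $L(z)\le K e^{\lstar z}$ only on the range $z>-r$. Arguments reached from a positive $z$ satisfy $z-\ell\ge z-r>-r$, so the recursion never leaves this range. On the base region $-r<z\le 0$ we have $L(z)=1$, and $K e^{\lstar z}=e^{\lstar(z+r)}>1$, so the claim holds. This is exactly why $K_{\ell,r}=e^{\lstar r}$ is the right constant. For $z>0$, the induction hypothesis applies to $z-\ell$ and $z-r$, both in $(-r,z)$, and the same identity gives
\[
L(z)\le K e^{\lstar z}\bigl(e^{-\lstar\ell}+e^{-\lstar r}\bigr)=K e^{\lstar z}.
\]
The induction is again on $\lceil (z+r)/\ell\rceil$, which strictly decreases along each step.

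\textbf{Remaining points.} Existence and uniqueness of $\lstar$ is a short preliminary. The map $\lambda\mapsto e^{-\lambda\ell}+e^{-\lambda r}$ is continuous and strictly decreasing from $2$ at $\lambda=0$ to $0$ as $\lambda\to\infty$, so it equals $1$ at exactly one point. Taking logarithms in \eqref{eq:growth} gives $\lstar z\le\log L(z)\le\lstar z+\lstar r$, which is the $O_{\ell,r}(1)$ statement.

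The only genuine obstacle is choosing the base region for the upper bound so that the induction closes. A naive base case at $z\le0$ fails, and the fix is to restrict to $z>-r$, noting that this region is invariant under the recursion. We should also check that $L$ as defined by \eqref{eq:n-rec} coincides with the minimum leaf count. Optimal substructure gives this: the root of any pure branching tree proving $z>0$ must branch, and its subtrees independently prove $z-\ell$ and $z-r$. This is already taken as given in the paper.
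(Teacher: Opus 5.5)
Your proof is correct and follows essentially the same route as the paper: strong induction on the level $\lceil z/\ell\rceil$, with each step closed by the identity $e^{-\lstar\ell}+e^{-\lstar r}=1$, and for the upper bound a restriction to the invariant range $z\ge -r$, where the base case reads $L(z)=1\le e^{\lstar(z+r)}$. The only difference is cosmetic: you prove $L(z)\ge e^{\lstar z}$ directly for all real $z$ instead of going through the paper's auxiliary function $U(z)=\max\{e^{\lstar z},1\}$. Also, your phrase ``for all $z\le r$ in a base range'' should read $z\le 0$.
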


\begin{proof}
By the definition of $\lstar$ in \eqref{eq:lambda-star}, multiplying
$e^{-\lstar\ell}+e^{-\lstar r}=1$ by $e^{\lstar z}$ yields
\begin{equation}
        e^{\lstar z}=e^{\lstar(z-\ell)}+e^{\lstar(z-r)}.
        \label{eq:exp-recur}
\end{equation}
To each residual $z$ we associate the level $m=\lceil z/\ell\rceil$, and we
argue by strong induction on $m$. The motivation is that when $z>0$
both child residuals lie at strictly smaller levels. In particular,
$\lceil(z-\ell)/\ell\rceil=m-1$, and since $z-r\le z-\ell$ (as $r\ge\ell$),
also $\lceil(z-r)/\ell\rceil\le m-1$.

For the lower bound of \eqref{eq:growth}, set
\[
        U(z)=
        \begin{cases}
        e^{\lstar z}, & z>0,\\
        1, & z\le0,
        \end{cases}
\]
which agrees with $L(z)$ on $z\le0$ by \eqref{eq:n-rec}. 

We first prove that $U(z)\le U(z-\ell)+U(z-r)$ for every
$z>0$. 
For each child residual $y\in\{z-\ell,z-r\}$ we have $e^{\lstar y}\le
U(y)$, since $e^{\lstar y}=U(y)$ when $y>0$ and $e^{\lstar y}\le1=U(y)$ when
$y\le0$; substituting both into \eqref{eq:exp-recur} gives $U(z)=e^{\lstar
z}=e^{\lstar(z-\ell)}+e^{\lstar(z-r)}\le U(z-\ell)+U(z-r)$. Let $P(m)$ be the assertion that $U(z)\le L(z)$ for
every $z$ with $\lceil z/\ell\rceil=m$. When $m\le0$ we have $z\le0$, so
$U(z)=1=L(z)$ by \eqref{eq:n-rec}, which settles the base of the induction.
Fix $m\ge1$ and assume $P(m')$ for all $m'<m$. Any $z$ at level $m$ has $z>0$,
and its children lie at levels below $m$, so the hypothesis gives
$U(z-\ell)\le L(z-\ell)$ and $U(z-r)\le L(z-r)$. Combining the
recurrence \eqref{eq:n-rec} with the recurrence inequality for $U$,
\[
        L(z)=L(z-\ell)+L(z-r)\ge U(z-\ell)+U(z-r)\ge U(z),
\]
which is $P(m)$. Hence $L(z)\ge U(z)=e^{\lstar z}$ for every $z>0$.

For the upper bound of \eqref{eq:growth}, let $Q(m)$ be the assertion that
$L(z)\le K_{\ell,r}e^{\lstar z}$ for every $z$ with $-r\le z$ and
$\lceil z/\ell\rceil=m$; proving $Q(m)$ for all $m$ suffices. When $m\le0$ we have $-r\le z\le0$, so $L(z)=1$ by
\eqref{eq:n-rec}, and $r+z\ge0$ gives
\[
        L(z)=1\le e^{\lstar(r+z)}=K_{\ell,r}e^{\lstar z},
\]
the base of the induction. Fix $m\ge1$ and assume $Q(m')$ for all $m'<m$. Any
$z$ at level $m$ has $z>0$, and each child $y\in\{z-\ell,z-r\}$ satisfies
$y>-r$ and lies below level $m$, so the hypothesis applies to both. Using
\eqref{eq:n-rec}, the hypothesis, and then \eqref{eq:lambda-star},
\[
        L(z)=L(z-\ell)+L(z-r)
        \le K_{\ell,r}e^{\lstar z}\bigl(e^{-\lstar\ell}+e^{-\lstar r}\bigr)
        =K_{\ell,r}e^{\lstar z},
\]
which is $Q(m)$. Taking logarithms in $e^{\lstar z}\le L(z)\le
K_{\ell,r}e^{\lstar z}$ finishes the proof.
\end{proof}

\section{A logarithmic threshold for cut schedules}
\label{sec:threshold}

With the branching rate in hand, we now bring cuts into the
picture. We begin with a normal form. For the node-count
objective, cuts may be moved to the root without increasing the
tree size. Analogous root-cut reductions are established for constant cuts
in~\cite[Lemma~7]{kazachkov2024abstract} and stated for
harmonically-worsening cuts therein, while the optimality of root cuts
under general node-processing-time functions is studied
in~\cite{han2026strength}. For completeness, we prove the version
needed here under the standing assumption $c_i\ge0$.

\begin{lemma}
\label{lem:root-cut-reduction}
For every target bound $Z$, there exists a minimal-size B\&C tree
proving $Z$ in which all cut nodes form a path starting at the root.
\end{lemma}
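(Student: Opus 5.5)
Starting from any minimal-size tree $T$, I will repeatedly move cut nodes upward until all of them sit on a single path at the root. Each move will keep the leaf count unchanged and keep all nodes certifying the target $Z$. To measure progress, call a tree \emph{normal} if all its cut nodes form a root path $\mathrm{root}=u_0,u_1,\dots,u_{k-1}$, where $u_{k}$ is the first branch node or leaf. For a non-normal tree, let $D(T)$ be the sum, over all cut nodes, of the number of branch nodes strictly above them. I will show that a minimal tree with $D(T)>0$ can be transformed into another tree of no larger size that proves $Z$ and has strictly smaller $D$. Since $D$ is a nonnegative integer, iterating the transformation yields a normal minimal tree.

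The local move is a swap of a branch node with a cut node directly below it. Take a cut node $w$ whose parent $v$ is a branch node; such a pair exists whenever the tree is not normal. Say $w$ is the $\ell$-child of $v$, the other child of $v$ is $v'$, and $w$ has $j$ cut nodes strictly above it. Then $w$'s child receives $c_{j+1}$. Now modify the tree. Replace $v$ by a cut node whose child is a branch node $\tilde v$, whose $\ell$-subtree is the subtree below $w$ and whose $r$-subtree is a copy of the subtree at $v'$. The total node count is unchanged: we had $v,w$ plus the two subtrees, and now we have the new cut node, $\tilde v$, plus the same two subtrees.

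Next I check labels. In the subtree formerly under $w$, every node sees the same multiset of improvements: the same branch improvements, and the same cut count at each position, so the same $C_k$. Its labels are therefore unchanged. In the subtree formerly at $v'$, every node now has exactly one extra cut above it. A node that had $k$ cuts above it now accumulates $C_{k+1}\ge C_k$ in place of $C_k$. Here I use $c_i\ge0$, so that $C_k$ is nondecreasing. All labels in that subtree therefore weakly increase. The cut indices of cut nodes lower in the $v'$ subtree shift by one, but cumulative improvements are all that matter, and those are monotone. So every leaf still has $z_T\ge Z$. One subtlety is that the new internal nodes may now already have label at least $Z$; in that case I keep them as internal nodes, because "proves" only constrains leaves. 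Pruning would give a strictly smaller tree, contradicting minimality, so minimality is preserved either way.

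The main obstacle is the potential argument, because the swap adds a cut into the $v'$ subtree and could in principle change $D$ there. To handle this, I will redefine $D$ as the number of pairs (branch node $b$, cut node $w$) with $b$ an ancestor of $w$. The moved cut now has one fewer branch ancestor. Cut nodes in the $v'$ subtree keep their branch ancestors ($\tilde v$ replaces $v$). Cut nodes below $w$ also keep theirs. The new cut node has the same branch ancestors as $v$ had, and these are exactly the branch ancestors of $w$ other than $v$. Hence $D$ drops by exactly one, which completes the induction. Finally, once the tree is normal, the cut nodes form a path starting at the root, as the lemma claims.
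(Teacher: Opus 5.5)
Your proof is correct, but it takes a different route from the paper. The paper performs one global rearrangement. It contracts all $m$ cut nodes of a minimal tree $T$ to get a pure branching tree $B$ on the same branch nodes and leaves, and prepends a path of all $m$ cuts at the root. It then checks each leaf once: a leaf that lay below $k\le m$ cuts keeps its branching gains and now receives $C_m\ge C_k$.

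You instead break this into elementary exchanges, each lifting a cut above its branch parent. You check that each exchange preserves the node count and weakly raises every label, and you get termination from the potential counting pairs (branch ancestor, cut node). Each swap preserves both the number of cut nodes and the branching skeleton obtained by contracting cuts, so your process ends at exactly the paper's tree $T'$. The paper reaches that tree in one step with no potential function, which is shorter. It also makes the ``main obstacle'' you identify disappear; your two definitions of $D$ coincide anyway.

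What your route buys is a local lifting lemma that keeps a valid proof of $Z$ at every intermediate step. That could be useful in variants of the model where costs must be tracked move by move rather than only for the final tree.
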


\begin{proof}
Let \(T\) be a minimal-size tree proving \(Z\), and let \(m\) be its
total number of cut nodes. Let \(B\) be the tree whose nodes are the
branch nodes and leaves of \(T\). The parent of such a node is its
nearest strict ancestor in \(T\) that is a branch node; the root of
\(B\) is the unique such node with no branch-node ancestor. This node is
unique because cut nodes have one child, so the portion of \(T\) before
the first branch node or leaf is a chain. Then \(B\) is a pure
branching tree with the same branch nodes as \(T\), and
since exactly the \(m\) cut nodes were deleted,
\(|V_B|=|V_T|-m\). Let \(T'\) consist of a path of \(m\) cut nodes
starting at the root, followed by \(B\). Then
\(|V_{T'}|=m+|V_B|=|V_T|\).

Consider any leaf \(v\) in \(T'\). Its branching improvements in \(T'\) are
those in \(T\), since \(v\) has the same branch-node ancestors in
both trees. Suppose the root-to-\(v\) path in \(T\) contained
\(k\) cut nodes. These are linearly ordered along the path, so the
\(j\)-th of them has exactly \(j-1\) cut nodes strictly above it.
Hence the cutting improvement at \(v\) in \(T\) was
\(c_1+\cdots+c_k=C_k\), while in \(T'\) it is
\(c_1+\cdots+c_m=C_m\). Moreover \(k\le m\), since the cut nodes on
one path form a subset of all cut nodes in \(T\). Because \(c_i\ge0\),
\(C_m-C_k=\sum_{i=k+1}^m c_i\ge0\), so the bound at \(v\) in \(T'\)
is at least its bound in \(T\). Therefore \(T'\) proves \(Z\), has
minimal size, and its cut nodes form a path starting at the root.
\end{proof}

Consequently, by Lemma~\ref{lem:root-cut-reduction}, the minimal size
over all B\&C trees is obtained by optimizing over \emph{cut-and-branch
trees}. In particular, after \(k\) root cuts the residual is \(Z-C_k\), and the pure
branching part has \(L(Z-C_k)\) leaves and \(2L(Z-C_k)-1\) nodes. Thus
\begin{equation}
        S^*_{\Bac}(Z)
        =
        \min_{k\ge0}
        \left\{k+2L(Z-C_k)-1\right\}.
        \label{eq:nested-size}
\end{equation}
Since $L(\cdot)\ge1$, the objective in \eqref{eq:nested-size} tends to infinity
with $k$, so the minimum is attained at some finite $k$.

In a cut-and-branch tree the target thus splits into a portion $C_k$
removed by root cuts and a residual $Z-C_k$ certified by branching.
Theorem~\ref{thm:cut-price-classification} is stated for schedules whose
extended limit \(\gamma=\lim_{k\to\infty}C_k/\log k\in[0,\infty]\)
exists, and shows  the limiting share of the target proved by cuts is characterized by $\gamma$.

We prove Theorem~\ref{thm:cut-price-classification} in two steps, isolating the
upper-bound direction first. The following lemma bounds the exponential size
rate from above by choosing the number of root cuts suitably in each case of
$\gamma$; the matching lower bound and the resulting split of the target are
then extracted in the proof of the theorem itself.

\begin{lemma}
\label{lem:size-rate-upper}
Assume that the extended limit
$\gamma=\lim_{k\to\infty}C_k/\log k\in[0,\infty]$ exists.
Then
\[
        \limsup_{Z\to\infty}\frac{\log S^*_{\Bac}(Z)}{Z}
        \le\frac{\lstar}{1+\gamma\lstar},
\]
where the value on the right is taken to be $0$ when $\gamma=\infty$.
\end{lemma}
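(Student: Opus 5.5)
The plan is to bound $S^*_{\Bac}(Z)$ from above using the formula \eqref{eq:nested-size}. For each $Z$ we pick a specific number of root cuts $k=k(Z)$, which gives
\[
S^*_{\Bac}(Z)\le k+2L(Z-C_k)-1 .
\]
For the branching term we use the upper bound of Theorem~\ref{thm:branching-rate}. It gives $L(Z-C_k)\le K_{\ell,r}e^{\lstar(Z-C_k)}$ when $Z-C_k>0$, and $L(Z-C_k)=1$ otherwise. So in all cases
\[
S^*_{\Bac}(Z)\le k+2K_{\ell,r}e^{\lstar\max(Z-C_k,0)}.
\]
Since $\log(a+b)\le\log2+\max(\log a,\log b)$, it suffices to make both $(\log k)/Z$ and $\lstar\max(Z-C_k,0)/Z$ asymptotically at most the target rate $\rho\defeq\lstar/(1+\gamma\lstar)$, or at most $\rho+\eps$ for an arbitrary $\eps>0$.

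\textbf{Case $0\le\gamma<\infty$.} Take $k=\lceil e^{\theta Z}\rceil$ with $\theta=\rho$, so $\log k=\theta Z+o(Z)$. Because $C_k/\log k\to\gamma$, we get $C_k=\gamma\theta Z+o(Z)$, hence $\max(Z-C_k,0)\le(1-\gamma\theta)Z+o(Z)$. The exponent of the branching term is then
\[
\lstar(1-\gamma\theta)Z+o(Z).
\]
The choice $\theta=\lstar/(1+\gamma\lstar)$ balances the two terms, since $\lstar(1-\gamma\theta)=\theta$. Note that $1-\gamma\theta=1/(1+\gamma\lstar)>0$. Both exponents are therefore $\rho Z+o(Z)$, and the limsup is at most $\rho$. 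When $\gamma=0$ this takes $k$ of order $e^{\lstar Z}$. A simpler alternative for $\gamma=0$ is $k=0$, which already gives rate $\lstar$; either choice works.

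\textbf{Case $\gamma=\infty$.} Fix $\eps>0$ and take $k=\lceil e^{\eps Z}\rceil$. Since $C_k/\log k\to\infty$, for large $Z$ we have $C_k\ge M\log k\ge M\eps Z$ for any fixed $M$. Choosing $M=1/\eps$ gives $C_k\ge Z$, so $L(Z-C_k)=1$. Then $S^*_{\Bac}(Z)\le k+1$, so $\log S^*/Z\le\eps+o(1)$. Letting $\eps\to0$ gives limsup $0$.

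The main obstacle is handling the $o(\cdot)$ terms rigorously. The limit $C_k/\log k\to\gamma$ is a statement along $k\to\infty$, and we need it along the subsequence $k(Z)\to\infty$, where it transfers directly because $k(Z)\to\infty$ as $Z\to\infty$. We also need care with the ceiling and with the case $\gamma=0$, where $C_k=o(\log k)=o(Z)$. Finally, the bound $L\le K_{\ell,r}e^{\lstar z}$ is only stated for $z>0$, which is why the $\max(\cdot,0)$ form and the value $L(z)=1$ for $z\le0$ are needed. I would write each case with an explicit $\eps$: for large $Z$, $(\gamma-\eps)\log k\le C_k$, which only requires a lower bound on $C_k$. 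Then let $\eps\to0$.
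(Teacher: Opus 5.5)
Your proposal is correct and follows the paper's proof: the same bound $S^*_{\Bac}(Z)\le k+2K_{\ell,r}e^{\lstar(Z-C_k)_+}$, the same $\log(u+v)\le\log 2+\max$ step, and essentially the same choice $k\approx e^{\theta Z}$ with $\theta=\lstar/(1+\gamma\lstar)$ when $\gamma<\infty$. For $\gamma=\infty$ the paper takes the least $k$ with $C_k\ge(1-\eps)Z$ and bounds its size through an auxiliary parameter $\eta$, whereas your direct choice $k=\lceil e^{\eps Z}\rceil$, with $C_k\ge\eps^{-1}\log k\ge Z$ for large $Z$, removes the residual entirely and is a slightly cleaner route to the same conclusion.
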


\begin{proof}
Recall from \eqref{eq:nested-size} that
$S^*_{\Bac}(Z)=\min_{k\ge0}\Phi_Z(k)$ with $\Phi_Z(k)=k+2L(Z-C_k)-1$.
By Theorem~\ref{thm:branching-rate} we have $L(z)\le
K_{\ell,r}e^{\lstar z_+}$ for every real $z$, where $z_+=\max\{z,0\}$; the case $z\le0$ uses $L(z)=1\le K_{\ell,r}=e^{\lstar r}$.
Evaluating $\Phi_Z$ at an arbitrary $k\ge1$ and applying this bound gives
$S^*_{\Bac}(Z)\le k+2K_{\ell,r}e^{\lstar(Z-C_k)_+}$. Since $u+v\le
2\max\{u,v\}$ for nonnegative $u,v$, taking logarithms yields, for every
$k\ge1$,
\begin{equation}\label{eq:proof-upper-master}
        \frac{\log S^*_{\Bac}(Z)}{Z}
        \le
        \frac{\log2}{Z}
        +\max\!\left\{\frac{\log k}{Z},\;
        \frac{\log(2K_{\ell,r})}{Z}+\lstar\frac{(Z-C_k)_+}{Z}\right\}.
\end{equation}

Let $a:=\lstar/(1+\gamma\lstar)$ and $\rho:=1/(1+\gamma\lstar)$, so that $\lstar\rho=a$ and
$1-\rho=\gamma\lstar/(1+\gamma\lstar)$, the latter equal to $\gamma a$
when $\gamma$ is finite, while $a=\rho=0$ when $\gamma=\infty$. We bound
the left side of \eqref{eq:proof-upper-master} by choosing $k$ suitably
for $\gamma <\infty$ and $\gamma = \infty$ respectively.

Suppose first that $\gamma<\infty$, so that $a>0$. Take
$k=\tilde k(Z)=\lfloor e^{aZ}\rfloor$, which tends to infinity with $Z$.
Then $\log\tilde k(Z)/Z\to a$, and since $\tilde k(Z)\to\infty$,
\[
        \frac{C_{\tilde k}}{Z}
        =\frac{C_{\tilde k}}{\log\tilde k}\cdot\frac{\log\tilde k}{Z}
        \to\gamma a,
        \qquad\text{hence}\qquad
        \frac{(Z-C_{\tilde k})_+}{Z}\to1-\gamma a=\rho.
\]
Passing to the limit in \eqref{eq:proof-upper-master} along this choice
of $k=\tilde k(Z)$, and using $\lstar\rho=a$, gives
\[
        \limsup_{Z\to\infty}\frac{\log S^*_{\Bac}(Z)}{Z}
        \le\max\{a,\lstar\rho\}=a.
\]

Suppose instead that $\gamma=\infty$, so that $a=0$. Fix
$\eps\in(0,1)$ and let $\bar k(Z)$ be the least $k$ with
$C_k\ge(1-\eps)Z$, which exists because $C_k\to\infty$ as $k\to\infty$.
Fix $\eta>0$. Since $C_k/\log k\to\infty$, we have
$C_j\ge(1-\eps)\eta^{-1}\log j$ for sufficiently large $j$, and the choice
$j=\lceil e^{\eta Z}\rceil$ then forces $C_j\ge(1-\eps)Z$ once $Z$ is
large enough, so that $\bar k(Z)\le j$ and
$\limsup_{Z\to\infty}\log\bar k(Z)/Z\le\eta$. As $\eta$ was arbitrary,
$\limsup_{Z\to\infty}\log\bar k(Z)/Z\le0$. By the definition of
$\bar k(Z)$ the residual obeys $(Z-C_{\bar k})_+\le\eps Z$, so evaluating
\eqref{eq:proof-upper-master} at $k=\bar k(Z)$ gives
$\limsup_{Z\to\infty}\log S^*_{\Bac}(Z)/Z\le\lstar\eps$. Letting
$\eps\downarrow0$ shows that this limit superior is at most $0=a$. In
both cases the limit superior is at most $a$, which finishes the proof.
\end{proof}

With the upper bound of Lemma~\ref{lem:size-rate-upper} in hand, we now combine
it with the branching lower bound of Theorem~\ref{thm:branching-rate} to pin
down both the residual split $z^*(Z)_+/Z$ and the exact exponential size rate.

\begin{theorem}
\label{thm:cut-price-classification}
Assume that the extended limit
\(\gamma=\lim_{k\to\infty}C_k/\log k\in[0,\infty]\) exists.
Let $k^*(Z)$ be any minimizer of \eqref{eq:nested-size}, set
$z^*(Z)=Z-C_{k^*(Z)}$, and define
$\widehat f(Z)=\min\{C_{k^*(Z)},Z\}/Z=1-z^*(Z)_+/Z$, where
$z_+\defeq\max\{z,0\}$. Then, as $Z\to\infty$,
$z^*(Z)_+/Z\to1/(1+\gamma\lstar)$ and
$\widehat f(Z)\to\gamma\lstar/(1+\gamma\lstar)$. Moreover,
\begin{equation}
        \frac{\log S^*_{\Bac}(Z)}{Z}\to\frac{\lstar}{1+\gamma\lstar}.
        \label{eq:bac-size-rate}
\end{equation}
When $\gamma=\infty$, these three limits are interpreted as $0,1,0$,
respectively. In particular, for $\gamma=0$, branch-and-cut and pure
branch-and-bound have the same exponential size rate $\lstar$, since
$2L(Z)-1=\Theta_{\ell,r}(e^{\lstar Z})$ by
Theorem~\ref{thm:branching-rate}.
\end{theorem}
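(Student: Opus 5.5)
The plan is to combine the upper bound of Lemma~\ref{lem:size-rate-upper} with a matching lower bound derived from Theorem~\ref{thm:branching-rate}, and then read off the residual split from the two-sided squeeze. Write $a=\lstar/(1+\gamma\lstar)$ and $\rho=1/(1+\gamma\lstar)$ as in the lemma (with $a=\rho=0$ if $\gamma=\infty$). At a minimizer $k^*=k^*(Z)$ we have
\[
S^*_{\Bac}(Z)=k^*+2L(z^*)-1\ge\max\{k^*,\,L(z^*)\},
\]
and by Theorem~\ref{thm:branching-rate}, $L(z^*)\ge e^{\lstar z^*_+}$. This holds for $z^*>0$, and for $z^*\le0$ both sides equal $1$. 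Hence
\[
\frac{\log S^*_{\Bac}(Z)}{Z}\ge\max\Bigl\{\frac{\log k^*}{Z},\ \lstar\frac{z^*_+}{Z}\Bigr\}.
\]
Together with the lemma, this gives $\limsup \log k^*/Z\le a$ and $\limsup z^*_+/Z\le a/\lstar=\rho$. When $\gamma=\infty$ this already yields all three limits: the size rate is squeezed to $0$, $z^*_+/Z\to0$, and $\widehat f\to1$.

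For finite $\gamma$ we need the reverse inequality $\liminf z^*_+/Z\ge\rho$, since the size-rate liminf then follows from $\log S^*/Z\ge\lstar z^*_+/Z$. The key step is to link $C_{k^*}$ to $\log k^*$. Fix $\eps>0$. Since $C_k/\log k\to\gamma$, there is a constant $M_\eps$ with $C_k\le(\gamma+\eps)\log k+M_\eps$ for all $k\ge1$; the case $k^*=0$ is trivial because then $C_0=0$. Therefore
\[
\frac{z^*_+}{Z}\ge1-\frac{C_{k^*}}{Z}\ge1-(\gamma+\eps)\frac{\log k^*}{Z}-\frac{M_\eps}{Z}.
\]
Set $x=\log S^*_{\Bac}(Z)/Z$. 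We have $\log k^*/Z\le x$, so $z^*_+/Z\ge1-(\gamma+\eps)x-o(1)$. Combining this with $x\ge\lstar z^*_+/Z$ gives
\[
x\ge\lstar\bigl(1-(\gamma+\eps)x\bigr)-o(1),
\]
that is, $x\ge\lstar/(1+(\gamma+\eps)\lstar)-o(1)$. Letting $\eps\downarrow0$ yields $\liminf x\ge a$, so \eqref{eq:bac-size-rate} holds.

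It remains to show $\liminf z^*_+/Z\ge\rho$. We use $x\to a$ and $\log k^*/Z\le x$ in the display above, which gives
\[
\liminf\frac{z^*_+}{Z}\ge1-(\gamma+\eps)a.
\]
Letting $\eps\to0$, this becomes $1-\gamma a=\rho$. Combined with the earlier bound $\limsup z^*_+/Z\le\rho$, we get $z^*_+/Z\to\rho$, and hence $\widehat f\to1-\rho=\gamma\lstar/(1+\gamma\lstar)$. The case $\gamma=0$ then reads rate $\lstar$, which matches $2L(Z)-1=\Theta(e^{\lstar Z})$ by Theorem~\ref{thm:branching-rate}.

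The main obstacle is the uniform bound $C_k\le(\gamma+\eps)\log k+M_\eps$ at the minimizer $k^*$, which may not tend to infinity along $Z$. This is why we use a bound valid for all $k$, with an additive constant, rather than passing to the limit along $k^*$. When $\gamma=0$, small $k^*$ is harmless precisely because of this additive constant.
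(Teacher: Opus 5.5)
Your proof is correct and follows the paper's argument. It uses the same two lower estimates $S^*_{\Bac}(Z)\ge k^*$ and $S^*_{\Bac}(Z)\ge e^{\lstar z^*_+}$, the upper bound of Lemma~\ref{lem:size-rate-upper}, and the same uniform bound $C_k\le(\gamma+\eps)\log k+M_\eps$ to handle minimizers $k^*$ that need not tend to infinity. The only difference is the order of steps: you first get $\liminf_{Z\to\infty}\log S^*_{\Bac}(Z)/Z\ge\lstar/(1+\gamma\lstar)$ from the self-referential inequality $x\ge\lstar\bigl(1-(\gamma+\eps)x\bigr)-o(1)$, which does not need the Lemma, whereas the paper first uses the Lemma to bound $\log(k^*+1)/Z$, derives $z^*_+/Z\to\rho$, and then reads off the size-rate lower bound.
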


\begin{proof}
Write $k^*=k^*(Z)$ and $z^*=z^*(Z)=Z-C_{k^*}$, and recall
$\Phi_Z(k)=k+2L(Z-C_k)-1$, so that
$S^*_{\Bac}(Z)=\Phi_Z(k^*)=\min_{k\ge0}\Phi_Z(k)$ by
\eqref{eq:nested-size}. As in Lemma~\ref{lem:size-rate-upper}, let $a:=\lstar/(1+\gamma\lstar)$ and $\rho:=1/(1+\gamma\lstar)$, with $a=\rho=0$ when $\gamma=\infty$, and
recall $\lstar\rho=a$ together with
$1-\rho=\gamma\lstar/(1+\gamma\lstar)$. The three assertions to be proved
are $z^*_+/Z\to\rho$, then $\widehat f(Z)\to1-\rho$, and
$\log S^*_{\Bac}(Z)/Z\to a$ as $Z \to \infty$.

Theorem~\ref{thm:branching-rate} supplies the two branching estimates
\[
        L(z)\le K_{\ell,r}\,e^{\lstar z_+}\quad(z\in\R),
        \qquad
        L(z)\ge e^{\lstar z}\quad(z>0),
\]
the first covering $z\le0$ as well, since there $L(z)=1\le
K_{\ell,r}=e^{\lstar r}$. Evaluating $\Phi_Z$ at $k^*$ and using $L\ge1$
and $k^*\ge0$, we derive the two lower estimates
\begin{equation}\label{eq:proof-lower-master}
        k^*+1\le S^*_{\Bac}(Z),
        \qquad
        e^{\lstar z^*_+}\le 2L(z^*)-1\le S^*_{\Bac}(Z).
\end{equation}
The middle inequality of the second part holds for $z^*>0$ by the lower
branching estimate and for $z^*\le0$ because then $L(z^*)=1$ and
$z^*_+=0$.

Lemma~\ref{lem:size-rate-upper} already gives
$\limsup_{Z\to\infty}\log S^*_{\Bac}(Z)/Z\le a$, and we combine it with
\eqref{eq:proof-lower-master} to prove the residual limit
$z^*_+/Z\to\rho$. From the second estimate in
\eqref{eq:proof-lower-master} we have $\lstar z^*_+\le\log
S^*_{\Bac}(Z)$, so dividing by $Z$ and $\lstar$ and passing to the limit superior gives
\[
        \limsup_{Z\to\infty}\frac{z^*_+}{Z}\le\frac{a}{\lstar}=\rho.
\]
When $\gamma=\infty$ this already forces $z^*_+/Z\to0=\rho$. Assume
henceforth that $\gamma<\infty$, and bound $C_{k^*}$ from above so as to
bound $z^*_+$ from below. The first estimate in
\eqref{eq:proof-lower-master} gives $\log(k^*+1)\le\log S^*_{\Bac}(Z)$,
hence $\limsup_{Z\to\infty}\log(k^*+1)/Z\le a$. Fix $\theta>0$ and
choose $M\ge2$ with $C_k\le(\gamma+\theta)\log k$ for all $k\ge M$.
Whether $k^*<M$, in which case $C_{k^*}\le C_{M-1}$, or $k^*\ge M$, in
which case $C_{k^*}\le(\gamma+\theta)\log(k^*+1)$, we have
\[
        C_{k^*}\le C_{M-1}+(\gamma+\theta)\log(k^*+1).
\]
The constant $C_{M-1}$ is independent of $Z$, so $C_{M-1}/Z\to0$. Combining it with $\limsup_{Z\to\infty}\log(k^*+1)/Z\le a$ yields 
$\limsup_{Z\to\infty}C_{k^*}/Z\le(\gamma+\theta)a$. Letting
$\theta\downarrow0$ gives $\limsup_{Z\to\infty}C_{k^*}/Z\le\gamma
a=1-\rho$. Since $z^*_+\ge z^*=Z-C_{k^*}$, we conclude
\[
        \liminf_{Z\to\infty}\frac{z^*_+}{Z}\ge1-(1-\rho)=\rho,
\]
and with the matching limit superior this proves $z^*_+/Z\to\rho$.

The two remaining assertions now follow. By the definition of
$\widehat f$,
\[
        \widehat f(Z)=1-\frac{z^*_+}{Z}\to1-\rho
        =\frac{\gamma\lstar}{1+\gamma\lstar}.
\]
For the size rate, the second estimate in \eqref{eq:proof-lower-master}
gives $\log S^*_{\Bac}(Z)/Z\ge\lstar z^*_+/Z$, whose limit is
$\lstar\rho=a$, so $\liminf_{Z\to\infty}\log S^*_{\Bac}(Z)/Z\ge a$. With
the matching bound from Lemma~\ref{lem:size-rate-upper} this gives
$\log S^*_{\Bac}(Z)/Z\to a=\lstar/(1+\gamma\lstar)$, which is
\eqref{eq:bac-size-rate}. In particular, when $\gamma=0$ the rate equals
$\lstar$, matching the pure $\Bab$ rate.
\end{proof}

We now specialize Theorem~\ref{thm:cut-price-classification} to the
two cut schedules studied in~\cite{kazachkov2024abstract} via
Corollary~\ref{cor:recoveries}.  For constant cuts, the corollary recovers,
and extends to $\ell\neq r$, a conclusion
of~\cite[Theorem~9]{kazachkov2024abstract} that the proportion of the
bound proved by branching vanishes as $Z\to\infty$.  For
harmonically-worsening cuts, the situation is more subtle.
Theorem~21 of~\cite{kazachkov2024abstract} establishes the limit fraction
$c\log2/(r+c\log2)$ for the approximately optimal trees produced by their Algorithm~1, whose
approximation guarantee carries a multiplicative factor, and
whether minimal-size trees exhibit the same limit is investigated
computationally in~\cite[Appendix~B]{kazachkov2024abstract}.
Since the trichotomy applies to an exact minimizer $k^*(Z)$
of~\eqref{eq:nested-size}, the corollary proves that the same limit
holds for minimal-size trees, confirming the observed convergence.

\begin{corollary}
\label{cor:recoveries}
Let $0<\ell\le r$ and $c>0$.
\begin{enumerate}
\item For constant cuts $C_k=ck$, we have 
$\widehat f(Z)\to1$ and $z^*(Z)_+/Z\to0$.
\item For harmonically-worsening cuts $C_k=cH_k$, where
$H_k=\sum_{i=1}^{k}1/i$ is the $k$th harmonic number, we have
\(
        \widehat f(Z)\longrightarrow{c\lstar}/{(1+c\lstar)},
\)
which for $\ell=r$, where $\lstar=\log2/r$, equals
$c\log2/(r+c\log2)$.
\end{enumerate}
\end{corollary}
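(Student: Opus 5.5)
Both parts follow from Theorem~\ref{thm:cut-price-classification}. The only work is to compute the extended limit $\gamma=\lim_{k\to\infty}C_k/\log k$ for each schedule, check that it exists in $[0,\infty]$, and substitute it into the conclusions $\widehat f(Z)\to\gamma\lstar/(1+\gamma\lstar)$ and $z^*(Z)_+/Z\to1/(1+\gamma\lstar)$, reading these as $1$ and $0$ when $\gamma=\infty$.

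For constant cuts, $C_k=ck$ with $c>0$, so $C_k/\log k=ck/\log k\to\infty$ and $\gamma=\infty$. The $\gamma=\infty$ clause of Theorem~\ref{thm:cut-price-classification} then gives $z^*(Z)_+/Z\to0$ and $\widehat f(Z)\to1$ directly. This holds for arbitrary $0<\ell\le r$, because the theorem only uses $\lstar$ from Theorem~\ref{thm:branching-rate}.

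For harmonically-worsening cuts, $C_k=cH_k$. I would invoke the standard estimate $\log(k+1)\le H_k\le 1+\log k$, which follows by comparing $\sum 1/i$ with $\int dx/x$. Dividing by $\log k$ gives $H_k/\log k\to1$, so $\gamma=c\in(0,\infty)$. The finite case of Theorem~\ref{thm:cut-price-classification} then yields $\widehat f(Z)\to c\lstar/(1+c\lstar)$.

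For $\ell=r$, the defining equation \eqref{eq:lambda-star} reads $2e^{-\lstar r}=1$, so $\lstar=\log 2/r$. Substituting and multiplying numerator and denominator by $r$ gives $c\log2/(r+c\log2)$.

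There is no real obstacle here. The only point needing care is that the theorem requires the extended limit to exist, not merely a $\limsup$, and the harmonic sandwich settles this.
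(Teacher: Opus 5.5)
Your proposal is correct and follows the paper's proof essentially verbatim. You compute $\gamma=\infty$ for $C_k=ck$ and $\gamma=c$ for $C_k=cH_k$ via the sandwich $\log(k+1)\le H_k\le\log k+1$, then read off the limits from Theorem~\ref{thm:cut-price-classification} and use $\lstar=\log 2/r$ when $\ell=r$. The only item the paper records that you leave implicit is that both schedules satisfy the standing assumption $c_i\ge0$ (here $c_i=c$ and $c_i=c/i$), which the root-cut reduction underlying the theorem requires.
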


\begin{proof}
Both schedules satisfy the standing assumption $c_i\ge0$. For
$C_k=ck$ we have $C_k/\log k\to\infty$, while for $C_k=cH_k$ the
bound $\log(k+1)<H_k\le\log k+1$ gives $C_k/\log k\to c$.  Applying
Theorem~\ref{thm:cut-price-classification} with $\gamma=\infty$ and
$\gamma=c$ respectively yields the claims; when $\ell=r$, \eqref{eq:lambda-star}
gives $\lstar=\log2/r$.
\end{proof}

\section{A polynomial versus exponential dichotomy}
\label{sec:dichotomy}

Section~\ref{sec:threshold} concerned the asymptotic share of the
target certified by cuts. We now ask a complementary question: when do
cuts change the complexity class of the proof itself, reducing the tree
size from exponential in $Z$ to polynomial? 

Let $S^*_{\Bab}(Z)\defeq2L(Z)-1$ denote the minimal pure $\Bab$ size.
By Theorem~\ref{thm:branching-rate},
$S^*_{\Bab}(Z)=\Theta_{\ell,r}(e^{\lstar Z})$, so any speedup must
come from cuts. Lemma~\ref{lem:root-cut-reduction} and the size formula
\eqref{eq:nested-size} apply under the standing assumption $c_i\ge0$. A
cut-and-branch tree with $k$ cuts has size
$k+\Theta_{\ell,r}(e^{\lstar(Z-C_k)_+})$, so a polynomial total forces
both $k=Z^{O(1)}$ and $(Z-C_k)_+=O(\log Z)$. The following theorem shows
these necessary conditions are jointly sufficient.

\begin{theorem}
\label{thm:dichotomy}
For any cut schedule in the model, $S^*_{\Bac}(Z)=Z^{O(1)}$ if and only
if there is a polynomially bounded cut count $k(Z)\le Z^{O(1)}$ such that
\begin{equation}
        (Z-C_{k(Z)})_+=O(\log Z).
        \label{eq:log-residual-condition}
\end{equation}
\end{theorem}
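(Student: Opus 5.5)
The plan is to work entirely with the cut-and-branch formula \eqref{eq:nested-size}, $S^*_{\Bac}(Z)=\min_{k\ge0}\Phi_Z(k)$ with $\Phi_Z(k)=k+2L(Z-C_k)-1$, which is justified by Lemma~\ref{lem:root-cut-reduction} under the standing assumption $c_i\ge0$. The two-sided branching estimates of Theorem~\ref{thm:branching-rate} are the only quantitative input. As in the proofs above, I will use them in the form
\[
        e^{\lstar z_+}\le 2L(z)-1\le 2K_{\ell,r}e^{\lstar z_+}\qquad(z\in\R).
\]
Here the lower bound for $z\le0$ is trivial since both sides equal $1$. The upper bound for $z\le0$ uses $L(z)=1\le K_{\ell,r}$. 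Each direction of the equivalence then becomes a short comparison of exponents.

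For sufficiency, suppose $k(Z)\le AZ^{p}$ and $(Z-C_{k(Z)})_+\le B\log Z$ for all large $Z$. I will evaluate $\Phi_Z$ at $k(Z)$, which gives
\[
        S^*_{\Bac}(Z)\le k(Z)+2K_{\ell,r}e^{\lstar B\log Z}\le AZ^{p}+2K_{\ell,r}Z^{\lstar B}=Z^{O(1)}.
\]
Since $K_{\ell,r}$ depends only on $(\ell,r)$, the exponent $\max\{p,\lstar B\}$ does not depend on $Z$, and this direction is complete.

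For necessity, suppose $S^*_{\Bac}(Z)\le AZ^{p}$ for all large $Z$. I will take $k(Z)\defeq k^*(Z)$, a minimizer of \eqref{eq:nested-size}, which exists as noted after that formula. Reusing \eqref{eq:proof-lower-master} from the proof of Theorem~\ref{thm:cut-price-classification}, which did not use the existence of $\gamma$, gives two facts. First, $k^*+1\le S^*_{\Bac}(Z)\le AZ^{p}$, so the cut count is polynomially bounded. Second, $e^{\lstar(Z-C_{k^*})_+}\le S^*_{\Bac}(Z)\le AZ^{p}$. Taking logarithms in the second fact gives $(Z-C_{k^*})_+\le(\log A+p\log Z)/\lstar=O(\log Z)$, using $\lstar>0$. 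This is exactly \eqref{eq:log-residual-condition}.

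I do not expect a genuine obstacle here, because the work was done in Theorem~\ref{thm:branching-rate} and Lemma~\ref{lem:root-cut-reduction}. The point needing care is the meaning of the asymptotic notation. I will state explicitly that $Z^{O(1)}$ and $O(\log Z)$ refer to $Z\to\infty$ with constants allowed to depend on $\ell,r$ and the schedule $c$ but not on $Z$. I will also check that the bounds remain valid when the residual is nonpositive, which is handled by working with $z_+$ throughout. Finally, I will remark that this statement, unlike Theorem~\ref{thm:cut-price-classification}, needs no assumption on the limit $\gamma$, since only the fixed-$Z$ sandwich for $\Phi_Z$ is used.
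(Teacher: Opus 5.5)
Your proposal is correct and follows the same route as the paper. For sufficiency, you evaluate \eqref{eq:nested-size} at $k(Z)$ and apply the upper branching bound $L(z)\le K_{\ell,r}e^{\lstar z_+}$. For necessity, you take a minimizer $k^*(Z)$, read off $k^*(Z)\le S^*_{\Bac}(Z)$, and use the lower bound $e^{\lstar z}\le L(z)$ to get $(Z-C_{k^*(Z)})_+=O(\log Z)$.
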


\begin{proof}
We characterize when $S^*_{\Bac}(Z)$ is polynomial using
\eqref{eq:nested-size}.

\smallskip
\noindent$(\Leftarrow)$ Suppose $k(Z)\le Z^{O(1)}$ satisfies
\eqref{eq:log-residual-condition}.  Evaluating \eqref{eq:nested-size} at
this cut count and applying the extension $L(z)\le K_{\ell,r}e^{\lstar z_+}$
noted in the proof of Lemma~\ref{lem:size-rate-upper},
\(
        S^*_{\Bac}(Z)
        \le
        k(Z)+2L(Z-C_{k(Z)})-1
        \le
        Z^{O(1)}+O_{\ell,r}\!\left(e^{\lstar(Z-C_{k(Z)})_+}\right)
        =
        Z^{O(1)},
\)
where the last equality uses \eqref{eq:log-residual-condition} in the form
$(Z-C_{k(Z)})_+=O(\log Z)$.
Hence B\&C has polynomial size.

\smallskip
\noindent$(\Rightarrow)$ Suppose $S^*_{\Bac}(Z)\le Z^{a}$ for some constant
$a$, and let $k^*(Z)$ minimize \eqref{eq:nested-size};
such a minimizer exists by the sentence following \eqref{eq:nested-size}.
Since $k^*(Z)+2L(Z-C_{k^*(Z)})-1\le Z^{a}$ with both terms nonnegative, we
have $k^*(Z)\le Z^{a}$, so the cut count is polynomially bounded, and
$L(Z-C_{k^*(Z)})\le Z^{a}$.  When $Z-C_{k^*(Z)}>0$, the lower bound
$e^{\lstar z}\le L(z)$ of Theorem~\ref{thm:branching-rate} gives
$e^{\lstar(Z-C_{k^*(Z)})}\le Z^{a}$, so taking logarithms,
\(
        (Z-C_{k^*(Z)})_+
        \le
        \frac{a}{\lstar}\log Z,
\)
the case $Z-C_{k^*(Z)}\le0$ being trivial.  Thus
\eqref{eq:log-residual-condition} holds with $k(Z)=k^*(Z)$.
\end{proof}

Theorem~\ref{thm:dichotomy} complements the results of Basu et al.~\cite{basu2022complexity}, who
exhibit instances on which branch-and-cut is exponentially smaller
than pure branching or pure cutting alone.  The next corollary
locates the boundary of this speedup on the scale of cut schedules. In particular, 
polynomial growth of $C_k$ suffices, while harmonically-worsening
cuts, despite proving a constant fraction of the target by
Corollary~\ref{cor:recoveries}, never achieve it.

\begin{corollary}
\label{cor:poly-schedules}
If $C_k\ge\beta k^{\alpha}$ for some constants $\alpha,\beta>0$, then $S^*_{\Bac}(Z)=Z^{O(1)}$.  In contrast, for
harmonically-worsening cuts $C_k=cH_k$, every cut count $k(Z)$ with
$(Z-C_{k(Z)})_+=O(\log Z)$ satisfies $k(Z)=e^{\Omega(Z)}$, so
$S^*_{\Bac}(Z)$ is not polynomial in $Z$.
\end{corollary}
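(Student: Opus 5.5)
Both parts of the corollary are applications of Theorem~\ref{thm:dichotomy}: for each schedule we either exhibit a good cut count or rule one out.

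\emph{Polynomial schedules.} Assume $C_k\ge\beta k^{\alpha}$. Take
\[
k(Z)=\bigl\lceil (Z/\beta)^{1/\alpha}\bigr\rceil .
\]
This gives $C_{k(Z)}\ge\beta k(Z)^{\alpha}\ge Z$, so $(Z-C_{k(Z)})_+=0=O(\log Z)$. Since $\alpha,\beta$ are fixed constants, $k(Z)\le (Z/\beta)^{1/\alpha}+1=Z^{O(1)}$ for $Z\ge 2$, say. The $(\Leftarrow)$ direction of Theorem~\ref{thm:dichotomy} then gives $S^*_{\Bac}(Z)=Z^{O(1)}$. One can also read this off directly: the bound $S^*_{\Bac}(Z)\le k(Z)+2K_{\ell,r}-1$ follows from \eqref{eq:nested-size}, because the residual is nonpositive and so $L=1$.

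\emph{Harmonic schedules.} Let $C_k=cH_k$ and suppose $(Z-C_{k(Z)})_+\le A\log Z$ for large $Z$.
\begin{itemize}[leftmargin=*]
\item The hypothesis gives $cH_{k(Z)}\ge Z-A\log Z$.
\item Use $H_k\le \log k+1$ for $k\ge1$. The case $k(Z)=0$ is impossible for large $Z$, since $C_0=0<Z-A\log Z$.
\item This yields $\log k(Z)\ge (Z-A\log Z)/c-1$, so $k(Z)\ge e^{Z/c-1}Z^{-A/c}$.
\item Because $Z^{-A/c}=e^{-(A/c)\log Z}$ and $\log Z=o(Z)$, we have $Z^{-A/c}\ge e^{-Z/(2c)}$ for large $Z$.
\item Hence $k(Z)\ge e^{Z/(2c)-1}=e^{\Omega(Z)}$.
\end{itemize}

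Now suppose $S^*_{\Bac}(Z)=Z^{O(1)}$. The $(\Rightarrow)$ direction of Theorem~\ref{thm:dichotomy} supplies a cut count $k(Z)\le Z^{O(1)}$ with $(Z-C_{k(Z)})_+=O(\log Z)$. By the bound just shown, that count also satisfies $k(Z)=e^{\Omega(Z)}$, which contradicts polynomial boundedness. So $S^*_{\Bac}(Z)$ is not polynomial in $Z$. This is consistent with Theorem~\ref{thm:cut-price-classification}, which gives the positive rate $\lstar/(1+c\lstar)$.

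The only point needing care is the $O(\log Z)$ slack in the harmonic case. It has to be absorbed by the exponent $Z/c$, and the estimate $Z^{-A/c}\ge e^{-Z/(2c)}$ does exactly that with room to spare. The rest is direct substitution.
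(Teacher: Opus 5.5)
Your proof is correct and follows the paper's argument. You use the same choice $k(Z)=\lceil (Z/\beta)^{1/\alpha}\rceil$ with the $(\Leftarrow)$ direction of Theorem~\ref{thm:dichotomy}, and the same bound $H_k\le\log k+1$ to force $k(Z)=e^{\Omega(Z)}$ in the harmonic case; your handling of $k(Z)=0$, the $O(\log Z)$ slack, and the explicit appeal to the $(\Rightarrow)$ direction just spell out steps the paper leaves implicit.
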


\begin{proof}
For the first claim, take $k(Z)=\lceil(Z/\beta)^{1/\alpha}\rceil$;
then $C_{k(Z)}\ge Z$, so \eqref{eq:log-residual-condition} holds
with a polynomially bounded cut count, and
Theorem~\ref{thm:dichotomy} applies.  For the second claim, suppose $k(Z)$ satisfies
\eqref{eq:log-residual-condition}, so that
$C_{k(Z)}\ge Z-O(\log Z)$.  Since $H_k\le\log k+1$, we also have
$C_{k(Z)}\le c\bigl(\log k(Z)+1\bigr)$.  Therefore
$c\bigl(\log k(Z)+1\bigr)\ge Z-O(\log Z)$, i.e.,
$k(Z)=e^{\Omega(Z)}$, so no polynomially bounded cut count
satisfies \eqref{eq:log-residual-condition}.
\end{proof}

\section{Concluding remarks}
\label{sec:conclusion}

We studied minimal tree size in the single-variable abstract B\&C model with
asymmetric branching. The branching rule $(\ell,r)$ enters only through the
rate $\lstar$ defined by $e^{-\lstar\ell}+e^{-\lstar r}=1$, and the
logarithmic cumulative cut strength is critical. If $C_k$ grows faster than
$\log k$, cuts prove asymptotically all of the target. If $C_k$ is asymptotic
to $\gamma\log k$, cutting and branching each prove a constant share, with
cut fraction $\gamma\lstar/(1+\gamma\lstar)$. If $C_k$ grows slower than
$\log k$, cuts yield no exponential-size improvement over pure B\&B.
Moreover, B\&C is polynomial while pure B\&B is exponential exactly when
polynomially many cuts reduce the residual bound to $O(\log Z)$.

Several questions remain open. Our trichotomy assumes that the extended
limit $\gamma=\lim_{k\to\infty}C_k/\log k$ exists, and it would be
interesting to describe the behavior of minimal trees when $C_k/\log k$
oscillates. It is also natural to ask whether analogous thresholds hold for
general time functions beyond node count, for branching rules whose
improvements vary across nodes, and for versions of the model with multiple
variables, in the spirit of the multiple- and general-variable branching
problems of Le Bodic and Nemhauser~\cite{le2017abstract}.

\medskip

\noindent \textbf{Acknowledgments.}{ The author declares
no competing interests. No data were generated or
analyzed in this study. ChatGPT from OpenAI was used to refine the original proofs, and the author verified
and takes full responsibility for the final draft.}

\bibliographystyle{plain}
\bibliography{references}

@article{land1960automatic,
  title = {An Automatic Method of Solving Discrete Programming Problems},
  author = {Land, A. H. and Doig, A. G.},
  journal = {Econometrica},
  volume = {28},
  number = {3},
  pages = {497--520},
  year = {1960},
  month = {Jul},
  publisher = {The Econometric Society},
  url = {https://www.jstor.org/stable/1910129}
}

@article{gomory1958outline,
  title = {Outline of an algorithm for integer solutions to linear programs},
  author = {Gomory, Ralph E.},
  journal = {Bulletin of the American Mathematical Society},
  volume = {64},
  number = {5},
  pages = {275--278},
  year = {1958},
  month = {Sep}
}

@book{junger200950,
  title={50 Years of Integer Programming 1958--2008: From the Early Years to the State-of-the-Art},
  editor={J{\"u}nger, Michael and Liebling, Thomas M. and Naddef, Denis and Nemhauser, George L. and Pulleyblank, William R. and Reinelt, Gerhard and Rinaldi, Giovanni and Wolsey, Laurence A.},
  year={2009},
  publisher={Springer},
  address={Berlin, Heidelberg},
  isbn={978-3-540-68274-5},
  edition={1},
  pages={804}
}

@incollection{achterberg2013mixed,
  author    = {Achterberg, Tobias and Wunderling, Roland},
  title     = {Mixed Integer Programming: Analyzing 12 Years of Progress},
  booktitle = {Facets of Combinatorial Optimization: Festschrift for Martin Gr{\"o}tschel},
  editor    = {J{\"u}nger, Michael and Reinelt, Gerhard},
  publisher = {Springer},
  address   = {Berlin, Heidelberg},
  year      = {2013},
  pages     = {449--481},
  isbn      = {978-3-642-38188-1}
}

@article{le2017abstract,
  title={An abstract model for branching and its application to mixed integer programming},
  author={Le Bodic, Pierre and Nemhauser, George},
  journal={Mathematical Programming},
  volume={166},
  number={1},
  pages={369--405},
  year={2017},
  publisher={Springer}
}

@article{anderson2021further,
  title={Further results on an abstract model for branching and its application to mixed integer programming},
  author={Anderson, Daniel and Le Bodic, Pierre and Morgan, Kerri},
  journal={Mathematical Programming},
  volume={190},
  number={1},
  pages={811--841},
  year={2021},
  publisher={Springer}
}

@inproceedings{dadush2020complexity,
  title={On the Complexity of Branching Proofs},
  author={Dadush, Daniel and Tiwari, Samarth},
  booktitle={35th Computational Complexity Conference (CCC 2020)},
  pages={34--1},
  year={2020},
  organization={Schloss Dagstuhl--Leibniz-Zentrum f{\"u}r Informatik}
}

@article{dey2023lower,
  title={Lower bounds on the size of general branch-and-bound trees},
  author={Dey, Santanu S and Dubey, Yatharth and Molinaro, Marco},
  journal={Mathematical Programming},
  volume={198},
  number={1},
  pages={539--559},
  year={2023},
  publisher={Springer}
}

@article{dey2024theoretical,
  title={A theoretical and computational analysis of full strong-branching},
  author={Dey, Santanu S and Dubey, Yatharth and Molinaro, Marco and Shah, Prachi},
  journal={Mathematical Programming},
  volume={205},
  number={1},
  pages={303--336},
  year={2024},
  publisher={Springer}
}

@article{dey2018theoretical,
  title={Theoretical challenges towards cutting-plane selection},
  author={Dey, Santanu S and Molinaro, Marco},
  journal={Mathematical Programming},
  volume={170},
  number={1},
  pages={237--266},
  year={2018},
  publisher={Springer}
}

@article{balas2010enumerative,
  title={On the enumerative nature of {Gomory}'s dual cutting plane method},
  author={Balas, Egon and Fischetti, Matteo and Zanette, Arrigo},
  journal={Mathematical Programming},
  volume={125},
  number={2},
  pages={325--351},
  year={2010},
  publisher={Springer}
}

@article{zanette2011lexicography,
  title={Lexicography and degeneracy: can a pure cutting plane algorithm work?},
  author={Zanette, Arrigo and Fischetti, Matteo and Balas, Egon},
  journal={Mathematical Programming},
  volume={130},
  number={1},
  pages={153--176},
  year={2011},
  publisher={Springer}
}

@article{dey2022cutting,
  title={Cutting plane generation through sparse principal component analysis},
  author={Dey, Santanu S and Kazachkov, Aleksandr and Lodi, Andrea and Munoz, Gonzalo},
  journal={SIAM Journal on Optimization},
  volume={32},
  number={2},
  pages={1319--1343},
  year={2022},
  publisher={SIAM}
}

@article{berthold2025learning,
  title={Learning to use local cuts},
  author={Berthold, Timo and Francobaldi, Matteo and Hendel, Gregor},
  journal={Mathematical Programming Computation},
  volume={17},
  number={3},
  pages={437--450},
  year={2025},
  publisher={Springer}
}

@article{basu2023complexity,
  title={Complexity of branch-and-bound and cutting planes in mixed-integer optimization},
  author={Basu, Amitabh and Conforti, Michele and Di Summa, Marco and Jiang, Hongyi},
  journal={Mathematical Programming},
  volume={198},
  number={1},
  pages={787--810},
  year={2023},
  publisher={Springer Berlin Heidelberg Berlin/Heidelberg}
}

@article{shah2026non,
  title={Non-monotonicity of branching rules with respect to linear relaxations},
  author={Shah, Prachi and Dey, Santanu S and Molinaro, Marco},
  journal={INFORMS Journal on Computing},
  volume={38},
  number={1},
  pages={53--66},
  year={2026},
  publisher={INFORMS}
}

@inproceedings{han2026strength,
  title={The Strength of Root Cuts in an Extended Abstract Branch-and-Cut Model},
  author={Han, Boyang and Kazachkov, Aleksandr M},
  booktitle={International Conference on Integer Programming and Combinatorial Optimization},
  pages={474--490},
  year={2026},
  organization={Springer}
}

@article{kazachkov2024abstract,
  title={An abstract model for branch and cut},
  author={Kazachkov, Aleksandr M and Le Bodic, Pierre and Sankaranarayanan, Sriram},
  journal={Mathematical Programming},
  volume={206},
  number={1},
  pages={175--202},
  year={2024},
  publisher={Springer}
}

@article{basu2022complexity,
  title={Complexity of branch-and-bound and cutting planes in mixed-integer optimization---{II}},
  author={Basu, Amitabh and Conforti, Michele and Di Summa, Marco and Jiang, Hongyi},
  journal={Combinatorica},
  volume={42},
  number={Suppl 1},
  pages={971--996},
  year={2022},
  publisher={Springer}
}

@article{cheng2026generalization,
  title={Generalization guarantees for learning score-based branch-and-cut policies in integer programming},
  author={Cheng, Hongyu and Basu, Amitabh},
  journal={Advances in Neural Information Processing Systems},
  volume={38},
  pages={118669--118699},
  year={2026}
}

\end{document}